\theoremstyle{plain}
\newtheorem{theorem}{Theorem}[section]
\newtheorem{definition}[theorem]{Definition}
\newtheorem{proposition}[theorem]{Proposition}
\theoremstyle{definition}
\newtheorem{remark}[theorem]{Remark}
\numberwithin{equation}{section}
\newcommand{\diff}{\mathop{}\!\mathrm{d}}
\DeclareMathOperator{\tr}{tr}
\DeclareMathOperator{\supp}{supp}
\title{Finite-time blowup for the infinite dimensional vorticity equation}
\author[1]{Evan Miller}
\affil[1]{University of Maine,
evan.miller1@maine.edu}
\begin{document}

\maketitle

\begin{abstract}
    In a previous work with Tai-Peng Tsai \cite{MillerTsai}, the author studied the dynamics of axisymmetric, swirl-free Euler equation in four and higher dimensions. 
    One conclusion of this analysis is that the dynamics become dramatically more singular as the dimension increases.
    In particular, the barriers to finite-time blowup for smooth solutions which exist in three dimensions do not exist in higher dimensions $d\geq 4$.
    Motivated by this result, we will consider a model equation that is obtained by taking the formal limit of the scalar vorticity evolution equation  as $d\to +\infty$. This model exhibits finite-time blowup of a Burgers shock type. The blowup result for the infinite dimensional model equation strongly suggests a mechanism for the finite-time blowup of smooth solutions of the Euler equation in sufficiently high dimensions.
    It is also possible to treat the full Euler equation as a perturbation of the infinite dimensional model equation, although this perturbation is highly singular.
\end{abstract}

\section{Introduction}

The incompressible Euler equation is given by
\begin{align}
    \partial_t u
    +(u\cdot\nabla) u
    +\nabla p
    &=0 \\
    \nabla\cdot u&=0,
\end{align}
where $u\in \mathbb{R}^d$ is the velocity, and $p$ is the pressure, which can be determined from the velocity by solving the Poisson equation
\begin{equation}
-\Delta p= \sum_{i,j=1}^d
\partial_i u_j \partial_j u_i.
\end{equation}
The first equation expresses Newton's second law, $F=ma$: $\partial_t u+ (u\cdot\nabla)u$ is the acceleration of a parcel of fluid, and $-\nabla p$ is the force acting on the particle. The divergence free constraint exists due to the conservation of mass.

Although incompressible Euler equation is among the oldest nonlinear PDEs, core questions about its solutions remain open. In particular, it is an open question whether smooth solutions of the Euler equation can form singularities in three and higher dimensions $d\geq 3$. Kato proved the local existence of strong solutions in $H^s\left(\mathbb{R}^d\right)$ when $s> 1+\frac{d}{2}$, but these solutions are only known to exist locally in time when $d\geq 3$. It is classical that when $d=2$, the Euler equation has global smooth solutions for arbitrary initial data. This is due to the transport of the scalar vorticity $\omega:=\partial_1 u_2-\partial_2 u_1$,
which has the evolution equation
\begin{equation} \label{2dVort}
    \partial_t \omega+(u\cdot\nabla)\omega=0.
\end{equation}
In any dimension, the Beale-Kato-Majda criterion guarantees that the $L^\infty$ norm of $\omega$ controls regularity \cite{BKM}, and in particular if a smooth solution of the Euler equation forms a singularity in finite-time $T_{max}<+\infty$, then
\begin{equation}
    \int_0^{T_{max}}\|\omega(\cdot,t)\|_{L^\infty}
    \diff t
    =+\infty.
\end{equation}
When $d=2$, the transport equation \eqref{2dVort} guarantees that the maximum magnitude of the vorticity cannot grow in time, and so there is a global smooth solution.
Note that when $d=3$, the vorticity is a vector, rather than a scalar, with $\omega=\nabla\times u$. In any dimension, $d\geq 2$ the vorticity is the anti-symmetric part of the velocity gradient, but only when $d=2,3$ does it have a representation as a scalar/vector.

One interesting class of solutions of the Euler equation are axisymmetric, swirl-free solutions.
An axisymmetric, swirl-free vector field can be expressed in the form
\begin{equation}
    u(x)=u_r(r,z)e_r+u_z(r,z)e_z,
\end{equation}
where
\begin{align}
    r&=\sqrt{x_1^2+...+x_{d-1}^2} \\
    z&=x_d \\
    e_r&=\frac{(x_1,...,x_{d-1},0)}{r} \\
    e_z&= e_d \\
    \nabla \cdot u
    &=
    \partial_r u_r
    +\partial_z u_z
    +\frac{d-2}{r}u_r.
\end{align}
Axisymmetry is preserved by the dynamics of the Euler equation. If the initial data $u^0\in H^s\left(\mathbb{R}^d\right)$, where $s>1+\frac{d}{2}$ is axisymmetric and swirl-free, then the unique, strong solution of the Euler equation, $u\in C\left([0,T_{max});H^s\left(\mathbb{R}^d\right)\right)$, will also be axisymmetric and swirl-free for all $0<t<T_{max}$.
For axisymmetric, swirl-free solutions, all of the dynamics can be encoded in the evolution of the scalar vorticity $\omega=\partial_ru_z-\partial_zu_r$, which satisfies the evolution equation
\begin{equation}
    \partial_t\omega+(u\cdot\nabla)\omega
    -k\frac{u_r}{r}\omega=0,
\end{equation}
where $k=d-2$.

Crucially, this evolution equation implies that the quantity $\frac{\omega}{r^k}$ is transported by the flow with
\begin{equation} \label{TransportEqn}
    (\partial_t+u\cdot\nabla)\frac{\omega}{r^k}=0.
\end{equation}
For $d=3$, Ukhovskii and Yudovich first proved global regularity for axisymmetric, swirl-free solutions of the Euler equation when $\frac{\omega^0}{r}$ is bounded in the appropriate space \cite{Yudovich}. This result has seen a number of improvements on the spaces in which $\frac{\omega^0}{r}$ must be bounded---see \cite{Danchin} and the references therein.
In particular, we will note that the criterion in \cite{Danchin} that $\omega^0\in L^\infty$ and $\frac{\omega^0}{r}\in L^{3,1}$ holds automatically for any axisymmetric, swirl-free initial data $u^0\in H^s\left(\mathbb{R}^3\right), s>\frac{5}{2}$, which is the natural Hilbert space for strong wellposedness, going back to the foundational work of Kato \cite{KatoLocalWP}. 

We will note that the requirement that $\omega$ vanish linearly at the axis $r=0$ is a regularity condition, not a boundary condition, although it is expressed as a boundary condition in coordinates. For axisymmetric, swirl-free vector fields, the anti-symmetric part of the velocity gradient,
\begin{equation}
    A_{ij}=\frac{1}{2}\left(\partial_i u_j -\partial_j u_i\right),
\end{equation}
can be expressed in terms of the the vorticity by
\begin{equation}
    A(x)=\frac{1}{2}\omega(r,z) 
    \left(e_r\otimes e_z
    -e_z\otimes e_r\right).
\end{equation}
The requirement that $\omega$ vanish linearly at the axis comes from the fact that $e_r$ has a singularity at the axis, while $re_r$ does not. If $\omega$ does not vanish at the axis, then $A$ will not be smooth.

In a major recent breakthrough, Elgindi proved finite-time blowup for $C^{1,\alpha}\left(\mathbb{R}^3\right)$ solutions of the Euler equation \cite{Elgindi}. These solutions are axisymmetric and swirl-free, but only vanish at a rate $\omega \sim r^\alpha$ for $\alpha \ll 1$. It is a key aspect of the proof that alpha is small, which enables a perturbative regime. Because $\frac{\omega^0}{r}\notin L^{3,1}$, this does not violate previous results for global existence of axisymmetric, swirl-free solutions of the Euler equation. This has direct bearing on the possible blowup of smooth solutions in four and higher dimensions. While $\frac{\omega^0}{r}$ is bounded in any dimension for smooth initial data, $\frac{\omega^0}{r^k}$ can be unbounded even for smooth initial data when $d\geq 4$---see \cite{MillerTsai} for a derivation of the transport equation \eqref{TransportEqn} and for examples/discussion. If the transported quantity $\frac{\omega}{r}$ being unbounded can lead to blowup for $C^{1,\alpha}$ solutions when $d=3$, then the transported quantity $\frac{\omega}{r^k}$ being unbounded could lead to finite-time blowup of smooth solutions when $d\geq 4$, and in particular in very high dimensions.

To justify this line of study, we will consider the dynamics of a model equation for the vorticity equation in infinite-dimensional limit.
In three and higher dimensions, all of the dynamics of axisymmetric, swirl-free solutions of the Euler equation can be expressed in terms of the vorticity evolution equation and a stream function as follows:
\begin{align} \label{VortStreamEqn}
    \partial_t \omega +(u\cdot\nabla)\omega
    -k\frac{u_r}{r}\omega
    &=0 \\
    u_r&= \partial_z \psi \\
    u_z&=-\partial_r\psi-k\frac{\psi}{r} \\
    \left(-\partial_z^2-\partial_r^2
    -\frac{k}{r}\partial_r
    +\frac{k}{r^2}\right)\psi 
    &=\omega \label{StreamPDE}.
\end{align}
    The Poisson-type equation in \eqref{StreamPDE} allows the stream function to be expressed in terms of a singular integral operator applied to the vorticity as follows:
    \begin{equation} \label{PoissonKernel}
    \psi(r,z)=
    -\frac 1{2\pi}\int_{-\infty}^\infty \int_0^\infty \int_0^\pi \frac {(\sin \theta)^{k+1} r^{d-1}\bar{r}^{d-1}}
{\left((r-\bar{r})^2+2r\bar{r}(1-\cos(\theta))+(z-\bar{z})^2\right)^{\frac{d}2}} \omega(\bar{r},\bar {z})
\diff\theta \diff\bar{r}\diff\bar{z}.
    \end{equation}
This is classical in three dimensions; for the derivation for four and higher dimensions,
see Appendix A in \cite{MillerTsai} for details.

It is possible to take the formal limit of this system as $d\to \infty$, which yields the infinite-dimensional vorticity equation
\begin{align} 
    \partial_t\omega+\phi\partial_z\omega
    +\omega\partial_z \phi&=0 \\
    \partial_r \phi&=\omega. 
\end{align}
We will prove that smooth solutions of the infinite-dimensional vorticity equation can exhibit finite-time blowup of a Burgers shock type.

\begin{theorem} \label{InfiniteVortThmIntro}
Suppose $\omega^0\in C_c^\infty\left(\mathbb{R}^+\times\mathbb{R}\right)$
and $\omega^0=\partial_r\phi^0$.
Then there exists a unique strong solution 
$\omega \in C^\infty\left([0,T_{max});
C_c^\infty\left(\mathbb{R}^+
\times\mathbb{R}\right)\right)$ to the infinite-dimensional vorticity equation.
If $\omega^0$ is not identically zero, then there is finite-time blowup with
\begin{equation}
    T_{max}=\frac{1}{-\inf_{\substack{r\in\mathbb{R}^+\\ z\in\mathbb{R}}}\partial_z\phi^0(r,z)}.
\end{equation}
This solution of the infinite-dimensional vorticity equation is given by
\begin{equation}
    \omega(r,z,t)=\frac{\omega^0(r,h(r,z,t))}
    {1+t\partial_y\phi^0(r,h(r,z,t))},
\end{equation}
where
\begin{equation}
\phi^0(r,z)
=
\partial_r^{-1}\omega^0(r,z)
:=
-\int_{r}^\infty \omega^0(\rho,z)\diff\rho
\end{equation}
\begin{equation}
    h(r,z,t)=g_{r,t}^{-1}(z),
\end{equation}
is the back-to-labels map of the flow given by
\begin{equation}
    g_{r,t}(z)=z+\phi^0(r,z)t.
\end{equation}
\end{theorem}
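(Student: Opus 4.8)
The plan is to notice that, after substituting $\phi=\partial_r^{-1}\omega$, the infinite-dimensional vorticity equation is, for each fixed $r$, the inviscid Burgers equation in the variable $z$, with $r$ entering only as a parameter. First I would put the evolution equation in conservation form: since $\phi\partial_z\omega+\omega\partial_z\phi=\partial_z(\phi\omega)$, any smooth solution satisfies $\partial_t\omega+\partial_z(\phi\omega)=0$. Applying the definite antiderivative $\partial_r^{-1}(\cdot)=-\int_r^\infty(\cdot)\,\diff\rho$ to this identity, and using $\phi\omega=\phi\partial_r\phi=\tfrac12\partial_r(\phi^2)$ so that $\partial_r^{-1}(\phi\omega)=\tfrac12\phi^2$ once the boundary term at $r=+\infty$ is killed by the decay $\phi(\cdot,z,t)\to0$ (which holds because $\omega(\cdot,t)$ has compact support and $\phi=\partial_r^{-1}\omega$), one obtains
\begin{equation*}
\partial_t\phi+\phi\partial_z\phi=0 .
\end{equation*}
Identifying the conservative structure and checking that $\partial_r^{-1}$ closes the system with no leftover $(z,t)$-dependent integration constant is the conceptual heart of the argument; everything downstream is the classical method of characteristics for a scalar conservation law.

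Second, I would solve this Burgers equation by characteristics. For each $r$ the curves $t\mapsto g_{r,t}(z)=z+t\,\phi^0(r,z)$ carry the constant value $\phi^0(r,z)$, so $\phi(r,\cdot,t)=\phi^0(r,\cdot)\circ g_{r,t}^{-1}$. Since $\partial_z g_{r,t}(z)=1+t\,\partial_z\phi^0(r,z)$ and $|g_{r,t}(z)-z|\le t\|\phi^0\|_{L^\infty}<\infty$, the map $g_{r,t}$ is a smooth diffeomorphism of $\mathbb{R}$ exactly while $1+t\,\partial_z\phi^0>0$ uniformly, i.e. for $t<T_{max}=\big(-\inf_{r,z}\partial_z\phi^0\big)^{-1}$, with smooth inverse $h(r,z,t)$ by the implicit function theorem. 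I also need $\inf_{r,z}\partial_z\phi^0\in(-\infty,0)$ when $\omega^0\not\equiv0$: for each $r$, $\int_{\mathbb{R}}\partial_z\phi^0(r,z)\,\diff z=0$ by compact support, so $\partial_z\phi^0(r,\cdot)$ is either identically zero or negative somewhere; if it vanished for every $r$ then $\phi^0$, hence $\omega^0=\partial_r\phi^0$, would be zero; and since $\partial_z\phi^0$ is continuous, supported in a fixed compact $r$-interval, and tends to $0$ as $z\to\pm\infty$, the negative infimum is attained at some $(r_*,z_*)$.

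Third, I would recover $\omega$ and confirm the stated formula. Differentiating $h(r,z,t)+t\,\phi^0(r,h(r,z,t))=z$ in $r$ gives $\partial_r h=-t\,\omega^0(r,h)\big/\big(1+t\,\partial_z\phi^0(r,h)\big)$, and then $\omega=\partial_r\phi=\partial_r\big[\phi^0(r,h)\big]=\omega^0(r,h)+\partial_z\phi^0(r,h)\,\partial_r h$ collapses algebraically to $\omega(r,z,t)=\omega^0(r,h)\big/\big(1+t\,\partial_z\phi^0(r,h)\big)$. On any $[0,T]$ with $T<T_{max}$ the denominator is bounded below by $1+T\inf\partial_z\phi^0>0$, so $\omega$ is smooth, and its support lies in the fixed compact set where $r$ belongs to the $r$-support $[r_0,r_1]$ of $\omega^0$ and $|z-h|\le T\|\phi^0\|_{L^\infty}$; hence $\omega\in C^\infty([0,T_{max});C_c^\infty(\mathbb{R}^+\times\mathbb{R}))$. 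That this $\omega$ solves the system is then immediate: $\partial_r\phi=\omega$ by construction, $\phi=\partial_r^{-1}\omega$ because $\phi(r,\cdot,t)\equiv0$ for $r>r_1$, and applying $\partial_r$ to $\partial_t\phi+\phi\partial_z\phi=0$ returns $\partial_t\omega+\partial_z(\phi\omega)=0$. Uniqueness follows because any solution in this class produces, via $\phi=\partial_r^{-1}\omega$, a $C^1$ solution of Burgers with the same data $\phi^0$, and $C^1$ solutions of Burgers are unique (characteristics cannot cross while the solution stays $C^1$), so $\phi$, and hence $\omega=\partial_r\phi$, is determined.

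Finally, for the blowup I would evaluate the $z$-derivative along the extremal characteristic: $\partial_z\phi\big(r_*,g_{r_*,t}(z_*),t\big)=\partial_z\phi^0(r_*,z_*)\big/\big(1+t\,\partial_z\phi^0(r_*,z_*)\big)=-1/(T_{max}-t)$, so $\|\partial_z\phi(\cdot,t)\|_{L^\infty}\ge 1/(T_{max}-t)\to\infty$ as $t\to T_{max}^-$. Since the $r$-support of $\omega(\cdot,t)$ stays inside the fixed bounded interval $[r_0,r_1]$ and $\partial_z\phi=\partial_r^{-1}(\partial_z\omega)$, this forces $\|\partial_z\omega(\cdot,t)\|_{L^\infty}\ge(r_1-r_0)^{-1}\|\partial_z\phi(\cdot,t)\|_{L^\infty}\to\infty$, so the solution cannot be continued smoothly with compact support past $T_{max}$, identifying $T_{max}$ exactly. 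The main obstacle I anticipate is not a hard estimate but making the first step airtight — justifying the interchange of $\partial_r^{-1}$ with $\partial_t$ and the vanishing of the $r=+\infty$ boundary term, and pinning down that the model's $\phi$ is the decaying antiderivative $\partial_r^{-1}\omega$ — after which the rest is a careful but routine characteristics computation.
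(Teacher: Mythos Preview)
Your proposal is correct and follows essentially the same route as the paper: reduce the infinite-dimensional vorticity equation to Burgers for $\phi=\partial_r^{-1}\omega$ (the paper does this as Propositions~\ref{BurgersVortEquiv} and~\ref{VortBurgersEquiv}), solve Burgers by characteristics with the same flow $g_{r,t}$ and back-to-labels map $h$, and recover the stated formula for $\omega$ by the identical chain-rule computation $\partial_r h=-t\,\omega^0(r,h)/(1+t\,\partial_y\phi^0(r,h))$. Your treatment is in fact slightly more careful than the paper's in two places---you explicitly argue that $\omega^0\not\equiv0$ forces $\inf\partial_z\phi^0<0$ via $\int_{\mathbb R}\partial_z\phi^0(r,z)\,\diff z=0$, and you close the blowup by transferring the divergence of $\|\partial_z\phi\|_{L^\infty}$ to $\|\partial_z\omega\|_{L^\infty}$ through the fixed $r$-support---but the underlying strategy is the same.
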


The fact that the infinite-dimensional vorticity equation exhibits finite-time blowup further suggests the possibility that in sufficiently high dimension, axisymmetric, swirl-free solutions of the Euler equation blowup in finite-time. This expands on previous arguments by the author and Tsai in \cite{MillerTsai}, which established a conditional blowup result for axisymmetric, swirl-free solutions of the Euler equation when $d\geq 4$, and furthermore the condition becomes weaker as $d\to +\infty$. In particular, we prove finite-time blowup for solutions with a colldiding vortex ring geometry---that is $\omega(r,z)$ is odd in $z$ and nonnegative for $z>0$--- when the proportion of energy in the $z$ direction is bounded below uniformly in time with
\begin{equation}
\frac{K_z}{K_0}>\frac{1+\delta}{d}.
\end{equation}
for some $\delta>0$. For details, see Theorem 1.6 in \cite{MillerTsai}. Furthermore, Hou and Zhang \cite{HouZhang} provided numerical evidence for axisymmetric, swirl-free solutions of the Euler equation $C^{1,\alpha}$, 
whenever $\alpha<1-\frac{2}{d}$. While this would not be a smooth blowup even if it could be established rigourously, it is still an improvement on the best known results for finite-time blowup of the Euler equation in very large dimension, as the arguments in \cite{Elgindi} require that the vorticity be $C^{1,\alpha}$
where $\alpha\ll 1$ is a perturbative parameter, and is therefore necessarily small.

In \cref{PerturbSection}, we will even discuss a way to write the vorticity equation in very high dimensions as a perturbation of the infinite-dimensional vorticity equation, where the perturbation has size $\epsilon=\frac{1}{d-2}$.
This could potentially allow blowup to be proven perturbatively for large enough $d\gg 3$, however the limit of the perturbation as $\epsilon \to 0$ is extremely singular, so any perturbative argument would need to be quite subtle in order to overcome the difficulties posed by this singular limit.

\begin{theorem} \label{PerturbThmIntro}
Suppose $u\in C\left([0,T_{max});H^s
\left(\mathbb{R}^d\right)\right)$ is an axisymmetric, swirl-free solution of the Euler equation in $d$-dimensional space, with $s>\frac{d}{2}+2, d\geq 3$.
Then the vorticity satisfies the equation
\begin{equation}
    \partial_t\omega 
    +\phi\partial_z\omega
    +\omega\partial_z\phi
    +\epsilon Q_\epsilon[\omega]=0,
\end{equation}
where
\begin{align}
    \phi=\partial_r^{-1}\omega \\
    \epsilon=\frac{1}{d-2},
\end{align}
and the operator $Q_\epsilon$ is defined by
\begin{equation}
    Q_\epsilon[\omega]=
    (-r\partial_z\phi
    +\epsilon\partial_z\sigma)
    \partial_r\omega
    +\left(\phi+r\omega
    -\frac{\sigma}{r}
    -\epsilon\partial_r\sigma
    \right)\partial_z\omega
    -\frac{\partial_z\sigma}{r}\omega.
\end{equation}
We define $\sigma$ to be the unique solution of the Poisson-type equation
\begin{equation}
    -\Delta_\epsilon \sigma
    =
    -\left(\partial_r^2+\partial_z^2\right)
    (r\phi),
\end{equation}
with decay as $r,|z|\to \infty$,
where
\begin{equation}
    -\Delta_\epsilon
    =
    -\epsilon\left(\partial_r^2
    +\partial_z^2\right)
    -\frac{1}{r}\partial_r
    +\frac{1}{r^2}.
\end{equation}
Note that $\sigma$ can be expressed as an integral against a Poisson-type kernel as in \eqref{PoissonKernel}.
\end{theorem}

\begin{remark}
    \Cref{InfiniteVortThmIntro} relates to a previous result of Drivas and Elgindi on the dependence of the Euler equation on the dimension \cite{DrivasElgindi}. In this paper, they consider flows where $\nabla u$ is nilpotent, considering in particular solutions on $\mathbb{T^d}$ $u_i$ only depends on $x_{i+1},...x_d$.
    For solutions of this form
    \begin{equation}
        \nabla\cdot (u\cdot\nabla)u=0,
    \end{equation}
    and so these are pressure-less solutions of the Euler equation and also satisfy the $d$-dimensional Burgers equation. While these solutions cannot blowup in finite-time, in the limit as $d\to +\infty$, they come closer to gradient blowup at the rate $|\nabla u(x,t)|\sim \frac{1}{C-t}$ known classically. In this case, however, there is no infinite-dimensional limiting equation, even as a purely formal limit. Likewise, axisymmetric swirl-free solutions of the Euler equation could potentially exhibit finite-time blowup in sufficiently large dimension, while blowup in any finite-dimension is ruled out for the Ansatz considered in \cite{DrivasElgindi}.

    Drivas and Elgindi note that the divergence free constraint become less restrictive in higher dimensions, meaning that pressure plays less of a role \cite{DrivasElgindi}, which could allow the formation of a Burgers type shock. We observe this exact fact in the infinite-dimensional Euler equation. The divergence free constraint for axisymmetric, swirl-free vector fields in $\mathbb{R}^d$ can be expressed as
    \begin{equation}
    \frac{1}{d-2}(\partial_ru_r
    +\partial_zu_z)+\frac{u_r}{r}=0.
    \end{equation}
    The formal limit of this equation is just $u_r=0$. This leaves the evolution equation
    \begin{equation}
    \partial_t u_z+u_z\partial_z u_z=0,
    \end{equation}
    which satisfies the ``infinite dimensional'' divergence free constraint $u_r=0$ without requiring a pressure term. The Burgers type blowup in \Cref{InfiniteVortThmIntro} is then unsurprising. The difficulty is getting this kind of singularity formation for large, but finite, dimension, given the difficulty of perturbing from infinity. \Cref{PerturbThmIntro} suggests an approach to this problem, but the $\epsilon\to 0$ limit of this problem is so singular, that the analysis would have to be very delicate.
\end{remark}

\section{The infinite dimensional limit of the vorticity equation} \label{InfiniteVortSection}

In this section, we will derive a formal limit for the vorticity equation \eqref{VortStreamEqn} as $d\to \infty$. 
Note that the dimension $d=k+2$ enters these equations as a parameter;
the qualitative structure of the equations is the same for any $d\geq 3$.
Therefore, it is possible to study how the dynamics of this equation change as this parameter changes, and in particular to consider the infinite dimensional limit
$k\to+\infty$.

Because $d=k+2$, taking the infinite dimensional limit is equivalent to taking the formal limit $k\to \infty$ in the stream function formulation. If we take the stream function formulation from the end of the last section, there are terms of order $k$, which makes taking the limit $k\to\infty$ difficult.

We will deal with this issue by taking the re-normalization
\begin{equation}
    \Tilde{\psi}=k\psi.
\end{equation}
The stream function formulation of the vorticity equation in terms of $\Tilde{\psi}$ can then be given as follows:
the velocity is determined in terms of $\Tilde{\psi}$
\begin{align}
    u_z&=
    -\frac{1}{k}\partial_r\Tilde{\psi}
    -\frac{\Tilde{\psi}}{r} \\
    u_r&= 
    \frac{1}{k}\partial_z \Tilde{\psi},
\end{align}
and $\Tilde{\psi}$ is determined by the vorticity from the elliptic equation
\begin{equation}
    \left(-\frac{1}{k}\partial_z^2
    -\frac{1}{k}\partial_r^2
    -\frac{1}{r}\partial_r
    +\frac{1}{r^2}\right)\Tilde{\psi}
    =\omega.
\end{equation}
The evolution equation remains
\begin{equation}
    \partial_t\omega
    +u_r\partial_r\omega
    +u_z\partial_z\omega 
    -\frac{k}{r}u_r\omega
    =0.
\end{equation}
Plugging in the velocity given by $\Tilde{\psi}$ to this evolution equation, we find that
\begin{equation}
    \partial_t\omega-\left(
    \frac{1}{k}\partial_r\Tilde{\psi}
    +\frac{\Tilde{\psi}}{r}
    \right)\partial_z\omega 
    +\frac{1}{k}\partial_z\Tilde{\psi} \partial_r\omega
    -\frac{\partial_z\Tilde{\psi}}{r}
    \omega=0.
\end{equation}

Taking the formal limit $k\to +\infty,$
we obtain the equations
\begin{align}
    \omega &=
    -\frac{1}{r}\partial_r\Tilde{\psi}
    +\frac{1}{r^2}\Tilde{\psi} \\
    &=
    -\partial_r\left(
    \frac{\Tilde{\psi}}{r}\right),
\end{align}
and
\begin{equation}
   \partial_t\omega
   -\frac{\Tilde{\psi}}{r}
   \partial_z\omega 
   -\omega\partial_z
   \left(\frac{\Tilde{\psi}}{r}\right)
   =0.
\end{equation}
If we make the substitution
\begin{equation}
    \phi=-\frac{\Tilde{\psi}}{r},
\end{equation}
then our equation reduces to
\begin{align}
    \partial_t\omega +\phi \partial_z \omega
    +\omega \partial_z \phi&=0,
    \label{VortInfinite} \\
    \partial_r\phi &=\omega. \label{VortPotential}
\end{align}

Taking the boundary condition 
\begin{equation}
    \lim_{r\to +\infty} \phi(r,z)=0,
\end{equation}
for all $z\in\mathbb{R}$, and integrating the equation $\eqref{VortPotential}$ from infinity, we find that
\begin{equation}
    \phi(r,z)=-\int_r^{+\infty} \omega(\rho,z)\diff\rho
\end{equation}

\begin{definition} \label{InverseDef}
Define the operator $\partial_r^{-1}:
C_c^\infty\left(\mathbb{R}^+\times \mathbb{R}\right) \to
C_c^\infty\left(\mathbb{R}^+\times \mathbb{R}\right)$ by
\begin{equation}
    \left(\partial_r^{-1}\omega\right)(r,z)
    =-\int_r^{+\infty} \omega(\rho,z)\diff\rho.
\end{equation}
Note that we take $\mathbb{R}^+=[0,+\infty)$, and so $\omega\in C_c^\infty\left(\mathbb{R}^+\times \mathbb{R}\right)$ does not require that $\omega(0,z)=0$. We only require that there exist $R,Z>0$ such that 
\begin{equation}
    \supp(\omega)\subset [0,R]\times [-Z,Z].
\end{equation}
It is straightforward to observe that if $r>R$ or $|z|>Z$, then
\begin{equation}
    \left(\partial_r^{-1}\omega\right)(r,z)
    =-\int_r^{+\infty} \omega(\rho,z)\diff\rho
    =0,
\end{equation}
and so $\partial_r^{-1}\omega\in C_c^\infty\left(\mathbb{R}^+\times \mathbb{R}\right)$, and the map is well defined.
\end{definition}

Note that $\phi=\partial_r^{-1}\omega$
is the unique solution of the equation $\partial_r\phi=\omega$ with compact support.

\begin{proposition} \label{InverseProp}
    The operator $\partial_r: C_c^\infty\left(\mathbb{R}^+\times \mathbb{R}\right) \to
C_c^\infty\left(\mathbb{R}^+\times \mathbb{R}\right)$ is a bijection with inverse $\partial_r^{-1}$.
\end{proposition}

\begin{proof}
    Suppose $\phi\in C_c^\infty\left(\mathbb{R}^+\times \mathbb{R}\right)$ and $\partial_r\phi=0$. Then the mean value theorem and the compact support condition imply $\phi=0$. 
    Because $\partial_r$ is a linear operator, this means that $\partial_r: C_c^\infty\left(\mathbb{R}^+\times \mathbb{R}\right) \to
C_c^\infty\left(\mathbb{R}^+\times \mathbb{R}\right)$ is injective, and therefore has an inverse. The fact that this inverse $\partial_r^{-1}$ is given in \Cref{InverseDef} follows immediately from the fundamental theorem of calculus, and this formula establishes that the map $\partial_r$ is surjective, and therefore a bijection, because for all $\omega\in C_c^\infty\left(\mathbb{R}^+\times \mathbb{R}\right)$, 
\begin{equation}
    \partial_r\left(\partial_r^{-1}\omega\right)
    =\omega.
\end{equation}
\end{proof}

\begin{definition}
    We will say that $\omega \in 
    C^\infty\left([0,T_{max});
C_c^\infty\left(\mathbb{R}^+
\times\mathbb{R}\right)\right)$ is a strong solution of the infinite dimensional vorticity equation if
\begin{equation}
    \partial_t\omega +
    \phi\partial_z\omega
    +\omega\partial_z\phi
    =0,
\end{equation}
where $\phi=\partial_r^{-1}\omega$
\end{definition}

\begin{remark}
    Note that our definition of $C_c^\infty\left(\mathbb{R}^+
\times\mathbb{R}\right)$ does not require the solution to be supported away from the axis $r=0$, and so there is no requirement that $\omega(0,z)=0$ or $\phi(0,z)=0$ for our wellposedness theory. For the full axisymmetric, swirl-free Euler equation in $\mathbb{R}^d$, for any dimension $d\geq 3$, we require that $\omega(0,z)=0$ in order for a solution to be smooth. More particularly, if $u\in C^2\left(\mathbb{R}^d\right)$ is axisymmetric and swirl free, then the vorticity must vanish linearly at the axis with $\frac{\omega}{r}\in L^\infty$. It is reasonable then, to consider data where $\omega(0,z)=0$ for all $z\in\mathbb{R}$, and we can see that this condition is preserved by the dynamics. The quantity $\phi$ in the infinite dimensional case corresponds to $u_z$ in the finite-dimensional case, and so, unlike with the vorticity, there is no geometric reason to impose the condition $\phi(0,z)=0$.

Note furthermore that this definition of $C_c^\infty\left(\mathbb{R}^+
\times\mathbb{R}\right)$ is natural for this problem because an axisymmetric swirl-free vector field is in $C_c^\infty\left(\mathbb{R}^d\right)$ if and only if its components expressed in cylindrical coordinates are in $C_c^\infty\left(\mathbb{R}^+
\times\mathbb{R}\right)$ as we have defined it.
\end{remark}

\begin{remark}
We will note that the infinite dimensional vorticity equation \eqref{VortInfinite} can, shockingly enough, be further simplified to the one dimensional Burgers equation. Recalling that $\omega=\partial_r\phi$, we can see that \eqref{VortInfinite} can be rewritten as
\begin{equation}
    \partial_t\partial_r\phi
    +\phi\partial_z\partial_r\phi
    +\partial_r\phi\partial_z\phi
    =0,
\end{equation}
which can be in turn expressed as
\begin{equation}
    \partial_t\partial_r \phi
    +\partial_r\left(\phi\partial_z\phi\right)=0.
\end{equation}
Applying the operator $\partial_r^{-1}$, we can see that $\phi$ must satisfy Burgers equation,
\begin{equation}
    \partial_t\phi+\phi\partial_z\phi=0.
\end{equation}
This formally establishes the reduction from the infinite dimensional vorticity equation to the one dimensional Burgers equation. We will give a more rigourous proof shortly.
\end{remark}

\section{Burgers shock type blowup for the infinite dimensional vorticity equation}

In this section, we will consider finite-time blowup for the infinite-dimensional vorticity equation derived in the previous section.
We will begin by formally proving the equivalence of solutions of the infinite dimensional vorticity equation and Burgers equation under the map $\partial_r^{-1}$.

\begin{proposition} \label{BurgersVortEquiv}
Suppose $\phi \in
C^\infty\left([0,T_{max});
C_c^\infty\left(\mathbb{R}^+
\times\mathbb{R}\right)\right)$ is a strong solution of Burgers equation, and let $\omega=\partial_r\phi$. Then
$\omega \in C^\infty\left([0,T_{max});
C_c^\infty\left(\mathbb{R}^+
\times\mathbb{R}\right)\right)$
is a strong solution of the infinite dimensional vorticity equation.
\end{proposition}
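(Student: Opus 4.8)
The plan is to obtain the infinite dimensional vorticity equation by simply differentiating Burgers equation in the variable $r$ and checking that all regularity and support properties are inherited by $\omega=\partial_r\phi$.

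First I would record that $\omega=\partial_r\phi$ lies in $C^\infty\big([0,T_{max});C_c^\infty(\mathbb{R}^+\times\mathbb{R})\big)$: differentiation in $r$ preserves joint smoothness and does not enlarge the compact spatial support. In particular, for each fixed $t$ the function $\phi(\cdot,\cdot,t)$ is compactly supported, hence $\lim_{r\to+\infty}\phi(r,z,t)=0$, so by the uniqueness of the decaying antiderivative recorded in \Cref{InverseRemark} we have $\phi=\partial_r^{-1}\omega$. Thus $\omega$ is an admissible candidate for a strong solution of the infinite dimensional vorticity equation in the sense defined in \Cref{InfiniteVortSection}, and it only remains to verify the evolution equation.

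Next I would apply $\partial_r$ to Burgers equation $\partial_t\phi+\phi\partial_z\phi=0$. Joint smoothness of $\phi$ lets the mixed partials commute, so $\partial_r\partial_t\phi=\partial_t\partial_r\phi=\partial_t\omega$, while the Leibniz rule gives $\partial_r(\phi\partial_z\phi)=(\partial_r\phi)(\partial_z\phi)+\phi\,\partial_r\partial_z\phi=\omega\,\partial_z\phi+\phi\,\partial_z\omega$, again using $\partial_r\partial_z\phi=\partial_z\partial_r\phi=\partial_z\omega$. Adding these identities yields exactly
\[
\partial_t\omega+\phi\,\partial_z\omega+\omega\,\partial_z\phi=0,
\]
which, together with $\phi=\partial_r^{-1}\omega$ from the previous step, shows that $\omega$ is a strong solution of the infinite dimensional vorticity equation.

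There is no serious obstacle here: the content of the statement is essentially the elementary identity $\partial_r\big(\phi\partial_z\phi\big)=\omega\partial_z\phi+\phi\partial_z\omega$ plus the bookkeeping that $\partial_r^{-1}$ is a genuine inverse on compactly supported functions. The only points needing (minor) care are the interchange of $\partial_t$ with $\partial_r$, which is immediate from smoothness, and the identification of $\phi$ with $\partial_r^{-1}\omega$, which uses the compact support of $\phi$ in $r$. The reverse implication — recovering Burgers from the vorticity equation — is where the injectivity and surjectivity of $\partial_r$ on $C_c^\infty$ do the real work, but that is not needed for this proposition.
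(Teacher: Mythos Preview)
Your proof is correct and follows essentially the same approach as the paper: differentiate Burgers equation in $r$, use the product rule and equality of mixed partials, and read off the infinite dimensional vorticity equation. Your write-up is in fact a bit more careful than the paper's, since you explicitly verify $\phi=\partial_r^{-1}\omega$ from the compact support of $\phi$, which the definition of strong solution requires.
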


\begin{proof}
We know that $\phi$ is a strong solution to Burger's equation, and so for all
$(r,z,t)\in
\mathbb{R}^+\times\mathbb{R}
\times[0,T_{max})$,
\begin{equation}
    \partial_t\phi +\phi\partial_z\phi=0.
\end{equation}
Differentiating this equation with respect to $r$, we find that for all
$(r,z,t)\in\left(
\mathbb{R}^+,\mathbb{R},[0,T_{max}\right)$,
\begin{align}
    \partial_r\partial_t\phi
    +\partial_r(\phi\partial_z\phi)&=0 \\
    \partial_t\partial_r\phi 
    +\phi\partial_z\partial_r\phi
    +\partial_r\phi\partial_z\phi
    &=0 \\
    \partial_t\omega+\phi\partial_z\omega
    +\omega\partial_z\phi &=0.
\end{align}
This completes the proof.
\end{proof}

\begin{remark}
The requirement that $\phi \in C_c^\infty$ is stronger than is strictly necessary. We really only need the equality of the mixed partials, $\partial_r\partial_t\phi
=\partial_t\partial_r\phi$,
and 
$\partial_r\partial_z\phi
=\partial_z\partial_r\phi$.
Requiring that $\phi$ be smooth is a convenient way to guarantee the equality of the mixed partials will hold, and we are interested in the blowup of smooth solutions with decay at infinity, and so $C_c^\infty$ is a convenient class to work with.
\end{remark}

\begin{proposition} \label{VortBurgersEquiv}
Suppose $\omega \in C^\infty\left([0,T_{max});
C_c^\infty\left(\mathbb{R}^+
\times\mathbb{R}\right)\right)$
is a strong solution of the infinite dimensional vorticity equation,
and $\phi=\partial_r^{-1}\omega$.
Then
$\phi \in C^\infty\left([0,T_{max});
C_c^\infty\left(\mathbb{R}^+
\times\mathbb{R}\right)\right)$
is a strong solution of Burgers equation.
\end{proposition}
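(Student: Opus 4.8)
The plan is to reverse, line by line, the computation carried out in the proof of \Cref{BurgersVortEquiv}. Write $\phi=\partial_r^{-1}\omega$, so that $\omega=\partial_r\phi$ by \Cref{InverseRemark}. Starting from the infinite dimensional vorticity equation and substituting $\omega=\partial_r\phi$ gives, for every $(r,z,t)\in\mathbb{R}^+\times\mathbb{R}\times[0,T_{max})$,
\[
\partial_t\partial_r\phi+\phi\,\partial_z\partial_r\phi+(\partial_r\phi)(\partial_z\phi)=0 .
\]
Since $\phi$ is smooth in the spatial variables for each $t$ and smooth in $t$, the mixed partials commute: $\partial_r\partial_t\phi=\partial_t\partial_r\phi$ and $\partial_r\partial_z\phi=\partial_z\partial_r\phi$. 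Using these identities, the left-hand side is exactly $\partial_r\!\left(\partial_t\phi+\phi\,\partial_z\phi\right)$, and hence
\[
\partial_r\!\left(\partial_t\phi+\phi\,\partial_z\phi\right)=0 .
\]
It then remains to invoke injectivity of $\partial_r$ on $C_c^\infty$ to conclude that $\partial_t\phi+\phi\,\partial_z\phi\equiv 0$, i.e.\ that $\phi$ solves Burgers' equation.

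To make that last step legitimate I first need the regularity claim, namely $\phi\in C^\infty\big([0,T_{max});C_c^\infty(\mathbb{R}^+\times\mathbb{R})\big)$. This follows from the explicit formula in \Cref{InverseDef}: $\partial_r^{-1}$ is a linear map that commutes with $\partial_z$ and with $\partial_t$ (differentiation under the integral sign is justified because the $r$-integral runs over a fixed half-line and $\omega(\cdot,\cdot,t)$ is smooth with compact support, locally uniformly in $t$), and \Cref{InverseDef} already records that it maps $C_c^\infty$ into $C_c^\infty$. Consequently $\partial_t\phi=\partial_r^{-1}\partial_t\omega\in C_c^\infty$, while $\phi\,\partial_z\phi$ is a product of $C_c^\infty$ functions, hence also lies in $C_c^\infty(\mathbb{R}^+\times\mathbb{R})$ for each fixed $t$. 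Therefore $\partial_t\phi+\phi\,\partial_z\phi$ belongs to $C_c^\infty(\mathbb{R}^+\times\mathbb{R})$, which is precisely the space on which \Cref{InverseRemark} guarantees $\partial_r$ is injective; since its $\partial_r$-derivative vanishes, it is the zero function.

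The computation itself is essentially bookkeeping, so the only points requiring genuine care — and where I would be most careful in writing the full proof — are the interchange of $\partial_r^{-1}$ with $\partial_t$, and the verification that the combination $\partial_t\phi+\phi\,\partial_z\phi$ really lands in the domain $C_c^\infty$ where injectivity of $\partial_r$ holds (rather than a larger function class, which would admit nonzero $r$-independent elements in the kernel of $\partial_r$). Once these are in place, the proposition follows. Combined with \Cref{BurgersVortEquiv}, this shows that $\partial_r$ and $\partial_r^{-1}$ set up a bijection between strong solutions of Burgers' equation and strong solutions of the infinite dimensional vorticity equation sharing the same maximal interval of existence, which is exactly what allows \Cref{InfiniteVortThmIntro} to be read off from the classical solution theory for Burgers' equation.
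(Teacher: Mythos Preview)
Your proof is correct and follows essentially the same route as the paper: substitute $\omega=\partial_r\phi$ into the vorticity equation, recognize the left-hand side as $\partial_r(\partial_t\phi+\phi\,\partial_z\phi)$, and then invoke the injectivity of $\partial_r$ on $C_c^\infty$ from \Cref{InverseRemark} after checking that $\partial_t\phi+\phi\,\partial_z\phi$ lies in that space. If anything, you are more careful than the paper in spelling out why $\phi$ inherits the required regularity and why the Burgers residual lands in $C_c^\infty$.
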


\begin{proof}
Applying \Cref{InverseProp} we can see that 
$\phi \in C^\infty\left([0,T_{max});
C_c^\infty\left(\mathbb{R}^+
\times\mathbb{R}\right)\right)$
and $\omega=\partial_r\phi$.
By hypothesis, we have
\begin{equation}
    \partial_t\omega
    +\phi\partial_z\omega
    +\omega\partial_z\phi=0.
\end{equation}
Substituting in $\omega=\partial_r\phi$, and using the fact that $\phi\in C^2$,
we can see that for all 
$(r,z,t)\in\left(
\mathbb{R}^+,\mathbb{R},[0,T_{max})\right)$,
\begin{align}
    \partial_r\left(\partial_t\phi
    +\phi\partial_z\phi\right)
    &=
    \partial_t\partial_r\phi
    +\phi\partial_z\partial_r\phi
    +\partial_r\phi\partial_z\phi\\
    &=0.
\end{align}
We showed in \Cref{InverseProp} that $\partial_r$ is a bijection on $C_c^\infty\left(\mathbb{R}^+\times \mathbb{R}\right)$,
so this implies that 
for all $(r,z,t)\in\left(
\mathbb{R}^+,\mathbb{R},[0,T_{max})\right)$,
\begin{equation}
    \partial_t\phi
    +\phi\partial_z\phi=0,
\end{equation}
which completes the proof.
\end{proof}

Now that we have shown that the infinite-dimensional vorticity equation and the 1D Burgers equation are equivalent under the map $\partial_r^{-1}$,
we will recall the standard theory for the existence, uniqueness, and finite-time blowup of solutions of Burgers equation, with proofs provided for the sake of completeness, as the flow map from the proof will be important in discussing the solutions to the infinite dimensional vorticity equation that are generated by solutions of Burgers equation.

\begin{theorem} \label{BurgersThm}
Suppose $\phi^0\in 
C_c^\infty\left(\mathbb{R}^+
\times\mathbb{R}\right)$. Then there exists a unique, strong solution of Burgers equation
$\phi\in C^\infty\left([0,T_{max});
C_c^\infty\left(\mathbb{R}^+
\times\mathbb{R}\right)\right)$.
If $\partial_z\phi^0(r,z)\geq 0$ for all $r\in\mathbb{R}^+,z\in \mathbb{R},$
then there is a global smooth solution, and so $T_{max}=+\infty$. If there exists $r_0\in\mathbb{R}^+z_0\in\mathbb{R}$, such that $\partial_z\phi^0(r_0,z_0)<0$, then there is finite-time blowup with
\begin{equation}
    T_{max}=\frac{1}{-\inf_{\substack{r\in\mathbb{R}^+\\ z\in\mathbb{R}}}\partial_z\phi^0(r,z)}.
\end{equation}
This solution is given by
\begin{equation} \label{BurgersKernel}
    \phi(r,z,t)=
    \phi^0(r,h(r,z,t),t),
\end{equation}
where
\begin{equation}
    h(r,z,t)=g_{r,t}^{-1}(z),
\end{equation}
and
\begin{equation}
    g_{r,t}(z)=z+\phi^0(r,z)t.
\end{equation}
\end{theorem}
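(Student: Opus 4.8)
The plan is to prove \Cref{BurgersThm} by the method of characteristics, treating $r\in\mathbb{R}^+$ purely as a parameter, since Burgers equation $\partial_t\phi+\phi\partial_z\phi=0$ involves no $r$-derivatives. For each fixed $r$, the characteristic ODEs are $\dot z = \phi$, $\dot\phi = 0$, so $\phi$ is constant along characteristics and the characteristic starting at $z$ at time $0$ is the straight line $g_{r,t}(z)=z+\phi^0(r,z)t$. First I would verify that for $t<T_{max}$ the map $z\mapsto g_{r,t}(z)$ is a smooth diffeomorphism of $\mathbb{R}$: its $z$-derivative is $1+t\,\partial_z\phi^0(r,z)$, which is strictly positive for every $r,z$ precisely when $t<\tfrac{1}{-\inf\partial_z\phi^0}$ (and for all $t>0$ when $\inf\partial_z\phi^0\ge 0$, giving $T_{max}=+\infty$). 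Monotonicity plus the fact that $g_{r,t}(z)-z=\phi^0(r,z)t$ is bounded (indeed compactly supported in $z$) gives surjectivity, hence $g_{r,t}$ is a bijection, and the inverse function theorem makes $h(r,z,t)=g_{r,t}^{-1}(z)$ smooth in all three variables on $\mathbb{R}^+\times\mathbb{R}\times[0,T_{max})$.

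Next I would check that $\phi(r,z,t):=\phi^0(r,h(r,z,t))$ solves Burgers equation with the right initial data. Differentiating the identity $h(r,z,t)+t\,\phi^0(r,h(r,z,t))=z$ in $t$ and in $z$ yields $\partial_t h = -\phi^0(r,h)/(1+t\,\partial_y\phi^0(r,h))$ and $\partial_z h = 1/(1+t\,\partial_y\phi^0(r,h))$, and then a direct computation gives $\partial_t\phi+\phi\partial_z\phi = \partial_y\phi^0(r,h)\bigl(\partial_t h + \phi^0(r,h)\partial_z h\bigr)=0$; at $t=0$, $g_{r,0}=\mathrm{id}$ so $h(r,z,0)=z$ and $\phi(r,z,0)=\phi^0(r,z)$. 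Smoothness of $\phi$ on the slab follows from smoothness of $h$ and $\phi^0$. For the compact support claim I would argue that since $\phi^0$ is supported in some $\{a\le r\le b,\ |z|\le M\}$, characteristics emanating from outside the $z$-support carry the value $0$ and stay fixed, so $\phi(r,\cdot,t)$ is supported in $\{a\le r\le b\}$ and, because characteristic speeds are bounded by $\|\phi^0\|_{L^\infty}$, in $|z|\le M+t\|\phi^0\|_{L^\infty}$; hence $\phi(\cdot,\cdot,t)\in C_c^\infty(\mathbb{R}^+\times\mathbb{R})$ for each $t<T_{max}$.

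For uniqueness I would note that any $C^1$ solution must be constant along the characteristics defined by its own velocity field, but since $\phi$ is constant along each characteristic the characteristics are straight lines determined by the initial data alone, forcing any solution to agree with the formula \eqref{BurgersKernel} on the region where characteristics do not cross, i.e.\ for $t<T_{max}$. Finally, for the blowup statement when $\inf\partial_z\phi^0<0$: differentiating Burgers in $z$ gives $(\partial_t+\phi\partial_z)\partial_z\phi = -(\partial_z\phi)^2$, a Riccati equation along characteristics, so $\partial_z\phi(r,g_{r,t}(z),t)=\partial_z\phi^0(r,z)/(1+t\,\partial_z\phi^0(r,z))$, which blows up as $t\uparrow -1/\partial_z\phi^0(r_0,z_0)$ at the minimizing point; conversely for $t<T_{max}$ the denominator stays bounded below by a positive constant on the compact support, so all derivatives of $\phi$ remain finite and the solution persists. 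The only genuinely delicate point is making the "uniqueness forces straight-line characteristics'' argument airtight — one must run the characteristic ODE for an arbitrary given smooth solution and invoke uniqueness for ODEs to conclude the back-to-labels map coincides with $h$ — but this is standard; everything else is bookkeeping with the inverse function theorem and the explicit formulas.
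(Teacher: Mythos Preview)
Your proposal is correct and follows essentially the same route as the paper: both argue via the characteristic flow $g_{r,t}(z)=z+t\phi^0(r,z)$, show its $z$-derivative $1+t\,\partial_z\phi^0$ stays positive for $t<T_{max}$, invert to get $h$, differentiate the defining identity to compute $\partial_t h$ and $\partial_z h$, and verify the Burgers equation and the blowup of $\partial_z\phi$ directly from the resulting formula. You are somewhat more careful than the paper about surjectivity of $g_{r,t}$ and propagation of compact support, and you phrase the blowup via the Riccati equation along characteristics rather than only the explicit formula, but these are cosmetic differences rather than a genuinely different approach.
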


\begin{proof}
The Burgers equation involves the advection of $\phi$ by itself, resulting in the flow map
\begin{equation}
    z \to z+t\phi^0(r,z)
\end{equation}
and the back to labels map $h(r,z,t)$.
We will begin by computing the derivatives of $h$ and introducing the variable $y=h(r,z,t)$.

First observe that $g_{r,t}(z)$ is smooth and that 
\begin{equation}
    \partial_zg_{r,t}(z)=
    1+t\partial_z\phi^0(r,z).
\end{equation}
This means that if for all $r\in\mathbb{R}^+,z\in\mathbb{R}$,
\begin{equation}
    \partial_z \phi^0(r,z)\geq0,
\end{equation}
then for all $t>0$,
\begin{equation}
    \partial_z g_{r,t}(z)\geq 1.
\end{equation}
Furthermore, if this derivative is negative at some point, then for all $0<t<T_{max}$,
\begin{equation}
    \partial_z g_{r,t}(z) \geq 
    1-\frac{t}{T_{max}}.
\end{equation}
In either case, for all $0<t<T_{max}$, we can conclude that $g_{r,t}(z)$ is smooth and strictly increasing in $z$ with a derivative bounded below by a positive constant. Consequently it has an inverse that is also smooth, and therefore
\begin{equation}
    \phi(r,z,t)=\phi^0(r,h(r,z,t))
\end{equation}
is a well defined function 
$\phi\in C^\infty\left([0,T_{max});
C_c^\infty\left(\mathbb{R}^+
\times\mathbb{R}\right)\right)$.

It remains to show that $\phi$ is in fact a solution of Burgers equation.
Observe that by definition,
\begin{align}
    z&=g_{r,t}(h(r,z,t)) \\
    &=g_{r,t}(y) \\
    &=y+t\phi^0(r,y),
\end{align}
and also
\begin{equation} \label{hDef}
    h(r,y+t\phi^0(r,y),t)=y.
\end{equation}
Differentiating \eqref{hDef} with respect to $y$, we find that
\begin{equation}
    \left(1+t\partial_y\phi^0(r,y)\right)
    \partial_zh(r,z,t)=1.
\end{equation}
Plugging for $y$ and dividing over a term, we find that
\begin{equation}
    \partial_z h(r,z,t)=\frac{1}
    {1+t\partial_y\phi^0(r,h(r,z,t))}
\end{equation}
Meanwhile, differentiating \eqref{hDef} with respect to $t$, we find that
\begin{equation}
    \partial_t h(r,z,t)
    +\phi^0(r,y)\partial_z h(r,z,t)=0.
\end{equation}
Rearranging terms, we find that 
\begin{equation}
    \partial_t h(r,z,t)
    =\frac{-\phi^0(r,h(r,z,t))}
    {1+t\partial_y\phi^0(r,h(r,z,t))}.
\end{equation}

Now we can show that
\begin{equation}
    \phi(r,z,t)=\phi^0(r,h(r,z,t))
\end{equation}
is a solution to Burgers equation.
First we compute that
\begin{align}
    \partial_t\phi(r,z,t)
    &=
    \partial_y\phi^0(r,h(r,z,t))
    \partial_t h(r,z,t) \\
    &=
    \frac{-\phi^0(r,y)
    \partial_y\phi^0(r,y)}
    {1+t\partial_y\phi^0(r,y)}.
\end{align}
Likewise we compute that
\begin{align}
    \partial_z\phi(r,z,t)
    &=
    \partial_y\phi^0(r,h(r,z,t))
    \partial_z h(r,z,t) \\
    &=
    \frac{\partial_y\phi^0(r,y)}
    {1+t\partial_y\phi^0(r,y)}.
\end{align}
Putting these computations together with the fact that $\phi(r,z,t)=\phi^0(r,y)$, we can conclude that
\begin{equation}
    \partial_t\phi(r,z,t)
    +\phi(r,z,t)\partial_z\phi(r,z,t)=0,
\end{equation}
and so we have a solution to Burgers equation.

We have now proven that there is a strong solution of Burgers equation on the interval $[0,T_{max})$. We still need to show that there is in fact blowup as $t\to T_{max}$ when $T_{max}<+\infty.$
The key point is that when there exists $r_0\in\mathbb{R}^+, z_0 \in\mathbb{R}$ such that $\partial_z\phi(r_0,z_0)<0$, then for $t>T_{max}$ the map
\begin{equation}
    z \to z+t\phi^0(r,z)
\end{equation}
is no longer invertible. The derivative with respect to $z$ of the inverse of this map becomes singular as $t\to T_{max}$, which produces the Burgers shock.
To proceed rigourously, recall that 
\begin{equation}
    \partial_z\phi(r,z,t)
    =
    \frac{\partial_y\phi^0(r,y)}
    {1+t\partial_y\phi^0(r,y)}.
\end{equation}
This clearly implies that
\begin{equation}
    \lim_{t\to T_{max}}\inf\partial_z\phi(r,z,t)
    =-\infty,
\end{equation}
as the denominator is clearly going to $0$ as
$t\to \frac{-1}{\inf\partial_y\phi^0(r,y)}$.
This completes the proof of finite-time blowup in the from of the Burgers shock.
Note that uniqueness for classical solutions follows immediately from the characteristics we have derived, because any strong solution must follow these characteristics.
\end{proof}

With the classical well-posedness theory of the one dimensional Burgers equation worked out, we can now immediately lift this theory to the infinite-dimensional vorticity equation.
We will now prove Theorem \ref{InfiniteVortThmIntro}, which is restated for the readers convenience.

\begin{theorem} \label{InfiniteVortThm}
Suppose $\omega^0\in 
C_c^\infty\left(\mathbb{R}^+
\times\mathbb{R}\right)$,
and let $\phi^0=\partial_r^{-1}\omega^0$.
Then there exists a unique strong solution 
$\omega \in C^\infty\left([0,T_{max});
C_c^\infty\left(\mathbb{R}^+
\times\mathbb{R}\right)\right)$ to the infinite-dimensional vorticity equation.
If $\partial_z\phi^0(r,z)\geq 0$ for all $r\in\mathbb{R}^+,z\in \mathbb{R},$
then there is a global smooth solution, and so $T_{max}=+\infty$. If there exists $r_0\in\mathbb{R}^+z_0\in\mathbb{R}$, such that $\partial_z\phi^0(r_0,z_0)<0$, then there is finite-time blowup with
\begin{equation}
    T_{max}=\frac{1}{-\inf_{\substack{r\in\mathbb{R}^+\\ z\in\mathbb{R}}}\partial_z\phi^0(r,z)}.
\end{equation}
This solution of the infinite-dimensional vorticity equation is given by
\begin{equation} \label{VortEq}
    \omega(r,z,t)=\frac{\omega^0(r,h(r,z,t))}
    {1+t\partial_y\phi^0(r,h(r,z,t))},
\end{equation}
where the back-to-labels map $h(r,z,t)$ is defined as in Theorem \ref{BurgersThm}.
\end{theorem}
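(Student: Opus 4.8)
The plan is to lift \Cref{BurgersThm} across the bijection $\partial_r^{-1}$, exactly as the equivalence statements \Cref{BurgersVortEquiv,VortBurgersEquiv} invite one to do; there is essentially no new analytic content to establish. For existence, given $\omega^0\in C_c^\infty\left(\mathbb{R}^+\times\mathbb{R}\right)$ I set $\phi^0=\partial_r^{-1}\omega^0$, which again belongs to $C_c^\infty\left(\mathbb{R}^+\times\mathbb{R}\right)$ by \Cref{InverseDef,InverseRemark}, and feed it into \Cref{BurgersThm} to obtain the unique strong Burgers solution $\phi\in C^\infty\left([0,T_{max});C_c^\infty\left(\mathbb{R}^+\times\mathbb{R}\right)\right)$, together with the value of $T_{max}$ and the characteristic representation $\phi(r,z,t)=\phi^0(r,h(r,z,t))$ provided there. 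Setting $\omega:=\partial_r\phi$, the operator $\partial_r$ keeps $\omega$ in $C^\infty\left([0,T_{max});C_c^\infty\right)$; one has $\omega(\cdot,\cdot,0)=\partial_r\phi^0=\partial_r\partial_r^{-1}\omega^0=\omega^0$; and $\partial_r^{-1}\omega=\partial_r^{-1}\partial_r\phi=\phi$ by \Cref{InverseRemark}, so $\phi$ really is the potential attached to $\omega$ in the definition of strong solution. \Cref{BurgersVortEquiv} then says precisely that $\omega$ solves the infinite-dimensional vorticity equation.

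For uniqueness, let $\widetilde\omega$ be any strong solution in the stated class with $\widetilde\omega(\cdot,\cdot,0)=\omega^0$, and set $\widetilde\phi=\partial_r^{-1}\widetilde\omega$. By \Cref{VortBurgersEquiv} it is a strong Burgers solution, and $\widetilde\phi(\cdot,\cdot,0)=\partial_r^{-1}\omega^0=\phi^0$, so the uniqueness clause of \Cref{BurgersThm} forces $\widetilde\phi=\phi$ wherever both are defined and hence $\widetilde\omega=\partial_r\widetilde\phi=\omega$. Since $\partial_r$ and $\partial_r^{-1}$ are mutually inverse bijections carrying each notion of strong solution into the other, the maximal existence interval is the same for $\phi$ and for $\omega$, so \Cref{BurgersThm} immediately yields the global solution when $\partial_z\phi^0\geq 0$ everywhere and, otherwise, $T_{max}=1/\bigl(-\inf_{r,z}\partial_z\phi^0\bigr)<\infty$. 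In the blowup case $\phi$ cannot be continued past $T_{max}$ inside $C^\infty\bigl([0,T');C_c^\infty\bigr)$, since \Cref{BurgersThm} gives $\liminf_{t\to T_{max}}\inf_{r,z}\partial_z\phi(r,z,t)=-\infty$ whereas any such continuation would keep $\|\partial_z\phi(\cdot,t)\|_{L^\infty}$ finite and continuous on the compact interval $[0,T_{max}]$; the identical obstruction applies to $\omega=\partial_r\phi$, and this is the claimed finite-time blowup.

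It remains to record the explicit kernel \eqref{VortEq}. Differentiating $\phi(r,z,t)=\phi^0(r,h(r,z,t))$ in $r$ gives
\begin{equation*}
\omega(r,z,t)=\partial_r\phi(r,z,t)=\bigl(\partial_r\phi^0\bigr)(r,h)+\bigl(\partial_y\phi^0\bigr)(r,h)\,\partial_r h(r,z,t),
\end{equation*}
and differentiating the defining identity \eqref{hDef} --- equivalently $z=h(r,z,t)+t\,\phi^0(r,h(r,z,t))$ --- in $r$ at fixed $z$ yields $\bigl(1+t(\partial_y\phi^0)(r,h)\bigr)\,\partial_r h=-t\,(\partial_r\phi^0)(r,h)$. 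Substituting this, using $\partial_r\phi^0=\omega^0$, and collecting the two terms over their common denominator collapses the right-hand side to $\omega^0(r,h)\big/\bigl(1+t(\partial_y\phi^0)(r,h)\bigr)$, which is \eqref{VortEq}; alternatively one could verify \eqref{VortEq} directly against the equation, but routing through Burgers is shorter. I expect no real obstacle here: all of the substance --- local solvability, the method of characteristics, the shock --- is already packaged in \Cref{BurgersThm}, and the only points needing care are the bookkeeping that $\partial_r$ and $\partial_r^{-1}$ preserve $C^\infty\bigl([0,T_{max});C_c^\infty\bigr)$ and interchange the two notions of strong solution (handled by \Cref{InverseRemark}) and the short implicit differentiation of $h$ in $r$ used to extract \eqref{VortEq}.
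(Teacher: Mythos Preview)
Your proposal is correct and follows essentially the same route as the paper: lift existence, uniqueness, and the blowup time from \Cref{BurgersThm} via the equivalences \Cref{BurgersVortEquiv,VortBurgersEquiv}, then derive \eqref{VortEq} by differentiating $\phi(r,z,t)=\phi^0(r,h(r,z,t))$ in $r$ and implicitly differentiating the characteristic identity to compute $\partial_r h$. The only cosmetic difference is that you differentiate $z=h+t\,\phi^0(r,h)$ directly, while the paper differentiates the equivalent relation $h(r,y+t\phi^0(r,y),t)=y$ and then substitutes the known formula for $\partial_z h$; you are also a bit more explicit than the paper about why $T_{max}$ transfers from $\phi$ to $\omega$.
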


\begin{proof}
Applying Propositions \ref{BurgersVortEquiv} and \ref{VortBurgersEquiv} to the solutions of Burgers equation in Theorem \ref{BurgersThm}, the local existence and uniqueness of strong solutions, as well as the specified blowup time follows immediately. It only remains to show that the equation \eqref{VortEq} holds for this unique solution.

Recall that 
\begin{equation}
    \phi(r,z,t)=\phi^0(r,h(r,z,t)),
\end{equation}
and that $\omega=\partial_r\phi$.
Applying the chain rule we find that
\begin{equation} \label{Step1}
    \omega(r,z,t)
    =
    \partial_r\phi^0(r,y)
    +\partial_y\phi^0(r,y)\partial_r h(r,z,t).
\end{equation}
Also recall that
\begin{equation}
    h(r,y+t\phi^0(r,y),t)=y,
\end{equation}
and that therefore differentiating with respect to $r$, we find that
\begin{equation}
    \partial_rh(r,z,t)+\partial_zh(r,z,t)t
    \partial_r\phi^0(r,y)=0.
\end{equation}
Subtracting over the latter term and plugging into our expression for $\partial_z h$,
\begin{equation}
    \partial_z h(r,z,t)=\frac{1}
    {1+t\partial_y\phi^0(r,y)},
\end{equation}
we find that
\begin{equation} \label{Step2}
    \partial_rh(r,z,t)=
    \frac{-t\omega^0(r,y)}
    {1+t\partial_y\phi^0(r,y)}.
\end{equation}
Plugging \eqref{Step2} back into our equation \eqref{Step1}, we find that for all 
$0\leq t<T_{max}$,
\begin{align}
    \omega(r,z,t)
    &=\omega^0(r,y)
    +\frac{-t\partial_y\phi^0(r,y)\omega^0(r,y)}
    {1+t\partial_y\phi^0(r,y)} \\
    &=
    \frac{\omega^0(r,y)}
    {1+t\partial_y\phi^0(r,y)} \\
    &=
    \frac{\omega^0(r,h(r,z,t))}
    {1+t\partial_y\phi^0(r,h(r,z,t))} .
\end{align}
This completes the proof.
\end{proof}

In \Cref{InfiniteVortThm} we proved finite-time blowup for all initial data satisfying
\begin{equation}
    \inf_{\substack{r\in\mathbb{R}^+\\ z\in\mathbb{R}}}\partial_z\phi^0(r,z)<0,
\end{equation} 
whereas in the introduction the result was stated for all non-trivial initial data. We will now show that this condition is satisfied for any data that is not identically zero.

\begin{proposition}
    Suppose $\omega \in 
C_c^\infty\left(\mathbb{R}^+
\times\mathbb{R}\right)$ is not identically zero. Then
\begin{equation}
    \inf_{\substack{r\in\mathbb{R}^+\\ z\in\mathbb{R}}}\partial_z\phi(r,z)<0.
\end{equation} 

\begin{proof}
    Because $\partial_r^{-1}$ is a bijective---see \Cref{InverseProp}, we can conclude that $\phi$ is not identically zero, and therefore there exists $(r_0,z_0)
    \in\mathbb{R}^+\times\mathbb{R})$ such that $\phi(r_0,z_0)\neq 0$.
    Suppose without loss of generality $\phi(r_0,z_0)> 0$. The compact support condition implies that there exists $Z>z_0$ such that $\phi(r_0,Z)=0$. The mean value theorem then implies that there exists $z'\in(z_0,Z)$ such that 
    \begin{equation}
    \partial_z \phi (r_0,z')
    =-\frac{\phi(r_0,z_0)}{Z-z_0}
    <0,
    \end{equation}
    and so
    \begin{equation}
    \inf_{\substack{r\in\mathbb{R}^+\\ z\in\mathbb{R}}}\partial_z\phi(r,z)<0.
\end{equation} 
The case where $\phi(r_0,z_0)<0$ is entirely analogous, simply taking $Z<z_0$,
such that $\phi(r_0,Z)=0$.
\end{proof}

\end{proposition}

A variant of the Beal-Kato-Majda criterion does apply to the infinite-dimensional vorticity equation, but only in the case where $\omega^0(r_0,z_0)\neq 0$ at the point $(r_0,z_0)$ where the infimum of $\partial_z\phi$ is attained. In the case where $\omega^0(r_0,z_0)=0$ at this point, then the Beale-Kato-Majda criterion may fail for the infinite-dimensional vorticity equation.

\begin{proposition} \label{BKM}
Suppose $\omega \in C^\infty\left([0,T_{max});
C_c^\infty\left(\mathbb{R}^+
\times\mathbb{R}\right)\right)$ is a strong solution to the infinite-dimensional vorticity equation with finite-time blowup at $T_{max}<+\infty$.
Suppose that
\begin{equation}
    \inf_{\substack{r\in\mathbb{R}^+\\ z\in\mathbb{R}}}\partial_z\phi(r,z)
    =\partial_z\phi^0(r_0,z_0),
\end{equation}
and that
\begin{equation}
    \omega^0(r_0,z_0)\neq 0,
\end{equation}
then
\begin{equation}
    \int_0^{T_{max}}
    \|\omega(\cdot,t)\|_{L^\infty} \diff t
    =+\infty.
\end{equation}
In particular, for all $0\leq t<T_{max}$
\begin{equation}
    \|\omega(\cdot,t)\|_{L^\infty} \geq
    \frac{\left|\omega^0(r_0,z_0)\right|}
    {1-\frac{t}{T_{max}}},
\end{equation}
and consequently for all $0\leq t<T_{max}$
\begin{equation} \label{IntegralBoundBKM}
    \int_0^t\|\omega(\cdot,\tau)\|_{L^\infty} \diff\tau 
    \geq
    \left|\omega^0(r_0,z_0)\right|T_{max}
    \log\left(\frac{T_{max}}{T_{max}-t}\right)
\end{equation}
\end{proposition}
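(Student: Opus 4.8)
The plan is to follow the solution along the single characteristic curve issuing from the worst point $(r_0,z_0)$, read off from the representation formula a pointwise lower bound on $\|\omega(\cdot,t)\|_{L^\infty}$ that diverges like $(T_{max}-t)^{-1}$, and then integrate.

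First I would record that, by hypothesis together with the formula for $T_{max}$ in \Cref{InfiniteVortThm},
\begin{equation}
    \partial_z\phi^0(r_0,z_0)
    =\inf_{\substack{r\in\mathbb{R}^+\\ z\in\mathbb{R}}}\partial_z\phi^0(r,z)
    =-\frac{1}{T_{max}},
\end{equation}
which is negative precisely because there is finite-time blowup. Next, using the defining identity $h(r,y+t\phi^0(r,y),t)=y$ of the back-to-labels map, I set $z_t:=z_0+t\phi^0(r_0,z_0)$, so that $h(r_0,z_t,t)=z_0$ for every $t\in[0,T_{max})$. Substituting the point $(r_0,z_t,t)$ into the representation formula \eqref{VortEq} and using $\partial_y\phi^0(r_0,z_0)=\partial_z\phi^0(r_0,z_0)$ (these are two names for the derivative of $\phi^0$ in its second argument) gives
\begin{equation}
    \omega(r_0,z_t,t)
    =\frac{\omega^0(r_0,z_0)}{1+t\,\partial_y\phi^0(r_0,z_0)}
    =\frac{\omega^0(r_0,z_0)}{1-\frac{t}{T_{max}}}.
\end{equation}

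Since $(r_0,z_t)\in\mathbb{R}^+\times\mathbb{R}$, this immediately yields the pointwise bound
\begin{equation}
    \|\omega(\cdot,t)\|_{L^\infty}
    \geq\bigl|\omega(r_0,z_t,t)\bigr|
    =\frac{\left|\omega^0(r_0,z_0)\right|}{1-\frac{t}{T_{max}}},
\end{equation}
where it is essential that $\omega^0(r_0,z_0)\neq0$ so that the numerator is a fixed positive constant. Integrating in time and evaluating $\int_0^t\frac{\diff\tau}{1-\tau/T_{max}}=T_{max}\log\frac{T_{max}}{T_{max}-t}$ produces \eqref{IntegralBoundBKM}; letting $t\uparrow T_{max}$ sends the right-hand side to $+\infty$, which gives $\int_0^{T_{max}}\|\omega(\cdot,t)\|_{L^\infty}\diff t=+\infty$.

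There is essentially no hard step here: the only point requiring care is that the spatial point at which $\omega$ is evaluated moves with $t$ --- it is the image $g_{r_0,t}(z_0)$ of the fixed Lagrangian label $z_0$ under the Burgers flow --- whereas the right-hand side of the representation formula involves $\omega^0$ only through the single value $\omega^0(r_0,z_0)$. This decoupling is exactly what lets the estimate proceed without any control of $\omega^0$ in a neighbourhood of $(r_0,z_0)$, and it is where the hypothesis $\omega^0(r_0,z_0)\neq0$ does its work.
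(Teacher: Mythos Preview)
Your proof is correct and is essentially identical to the paper's: both evaluate the representation formula \eqref{VortEq} at the moving point $(r_0,z_0+t\phi^0(r_0,z_0))$ along the characteristic from $(r_0,z_0)$, use $\partial_z\phi^0(r_0,z_0)=-1/T_{max}$ to obtain the pointwise lower bound $\|\omega(\cdot,t)\|_{L^\infty}\geq |\omega^0(r_0,z_0)|/(1-t/T_{max})$, and then integrate. Your write-up is in fact slightly more explicit than the paper's in justifying the identity $h(r_0,z_t,t)=z_0$ and the relation between $T_{max}$ and the infimum.
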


\begin{proof}
Observe that for all 
$0\leq t<T_{max}$,
\begin{align}
    \|\omega(\cdot,t)\|_{L^\infty}
    &\geq 
    |\omega(r_0,z_0+t\phi^0(r_0,z_0),t)| \\
    &=
    \frac{|\omega^0(r_0,z_0)|}
    {1+t\partial_z\phi^0(r_0,z_0)} \\
    &=
    \frac{|\omega^0(r_0,z_0)|}
    {1-\frac{t}{T_{max}}}.
\end{align}
Integrating this lower bound immediately gives the lower bound \eqref{IntegralBoundBKM}, and this completes the proof.
\end{proof}

\begin{remark}
It can easily be seen from the formula for the evolution of the vorticity that singularities form at a rate $\sim \frac{1}{1-\frac{t}{T_{max}}}$, and that furthermore, the vorticity blows up in the $L^\infty$ norm at this rate whenever the initial vorticity is nonzero at a point where $\partial_z\phi^0$ is minimized. 
It is not clear, however, that a Beale-Kato-Majda criterion must hold in the case where the initial vorticity is zero at all points where $\partial_z\phi^0$ is minimized. In fact, there is a simple counterexample showing that the Beale-Kato-Majda criterion does not hold for all solutions of the infinite-dimensional vorticity equation that blowup in finite-time. Indeed, there are solutions of the infinite-dimensional vorticity equation that blowup in finite-time which are uniformly bounded up until $T_{max}$.
\end{remark}

\begin{proposition} \label{BKMcounterexample}
Take the initial data
\begin{equation}
    \omega^0(r,z)=2rz\exp(-r^2-z^2),
\end{equation}
and let $\omega \in 
C^\infty\left([0,T_{max});
C^\infty\left(\mathbb{R}^+
\times\mathbb{R}\right)\right)$ be the unique strong solution to the infinite-dimensional vorticity equation with this initial condition.
Then $T_{max}=1$, and for all $0\leq t<1$,
\begin{equation}
    |\omega(r,z,t)|\leq 1.
\end{equation}
\end{proposition}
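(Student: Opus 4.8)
The plan is to read everything off the explicit solution formula from \Cref{InfiniteVortThm} and then reduce the uniform bound to an elementary inequality.

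First I would compute the velocity potential. Since $\int_r^\infty \rho\, e^{-\rho^2}\diff\rho = \tfrac12 e^{-r^2}$, the definition of $\partial_r^{-1}$ gives
\begin{equation*}
    \phi^0(r,z) = -\int_r^\infty 2\rho z\, e^{-\rho^2-z^2}\diff\rho = -z\,e^{-r^2-z^2},
\end{equation*}
and hence $\partial_z\phi^0(r,z) = (2z^2-1)e^{-r^2-z^2}$. Because $2z^2-1\ge -1$ and $0<e^{-r^2-z^2}\le 1$, we have $\partial_z\phi^0\ge -1$ on all of $\mathbb{R}^+\times\mathbb{R}$, while $\partial_z\phi^0(r,0)=-e^{-r^2}\to -1$ as $r\to 0^+$; thus $\inf\partial_z\phi^0=-1$, and since for instance $\partial_z\phi^0(1,0)=-e^{-1}<0$, \Cref{InfiniteVortThm} gives $T_{max}=1$. (Strictly speaking this $\omega^0$ is not compactly supported, so one first notes that the construction in \Cref{BurgersThm} and \Cref{InfiniteVortThm} goes through verbatim for rapidly decaying smooth data: compact support is used only to make $\partial_r^{-1}$ and the back-to-labels map well defined, and here $\phi^0$ is smooth with $\partial_z\phi^0$ bounded and bounded below, so $g_{r,t}$ is a smooth diffeomorphism of $\mathbb{R}$ for every $t<1$ and the formula \eqref{VortEq} produces a genuine smooth solution on $[0,1)$.)

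Next I would set up the uniform bound. Fix $t\in[0,1)$ and $r>0$. The map $g_{r,t}(z)=z+t\phi^0(r,z)$ has derivative $1+t\partial_z\phi^0(r,z)\ge 1-t>0$, so it is a smooth diffeomorphism of $\mathbb{R}$; consequently, as $(r,z)$ ranges over $\mathbb{R}^+\times\mathbb{R}$, so does the pair $(r,y)$ with $y=h(r,z,t)$. Substituting $\omega^0(r,y)=2ry\,e^{-r^2-y^2}$ and $\partial_y\phi^0(r,y)=(2y^2-1)e^{-r^2-y^2}$ into \eqref{VortEq} yields
\begin{equation*}
    \|\omega(\cdot,t)\|_{L^\infty} = \sup_{\substack{r>0\\ y\in\mathbb{R}}}\left|\frac{2ry\,e^{-r^2-y^2}}{1+t(2y^2-1)e^{-r^2-y^2}}\right|,
\end{equation*}
and since the denominator equals $1+t\partial_y\phi^0(r,y)\ge 1-t>0$, it suffices to prove
\begin{equation*}
    2|ry|\,e^{-r^2-y^2}\ \le\ 1+t(2y^2-1)e^{-r^2-y^2}\qquad\text{for all }r>0,\ y\in\mathbb{R},\ t\in[0,1].
\end{equation*}

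Finally I would prove this inequality. Its right-hand side minus its left-hand side is affine in $t$, so it is enough to check the endpoints $t=0$ and $t=1$. At $t=0$ the claim is $2|ry|\,e^{-r^2-y^2}\le 1$, which follows from $\max_{\rho\ge 0}2\rho e^{-\rho^2}=\sqrt{2/e}$ and $\max_{s\in\mathbb{R}}|s|e^{-s^2}=1/\sqrt{2e}$, whose product is $1/e<1$. At $t=1$ the claim rearranges to $\bigl(2|ry|+1-2y^2\bigr)e^{-r^2-y^2}\le 1$; bounding $2|ry|\le r^2+y^2$ replaces the left-hand side by $(1+r^2-y^2)e^{-r^2-y^2}$, and then $1+r^2-y^2\le 1+r^2+y^2\le e^{r^2+y^2}$ finishes it. I do not expect a genuine obstacle here: the only steps needing care are the change of variables $z\mapsto y$, which turns the supremum over physical space into a clean supremum over $(r,y)$, and the remark that \Cref{InfiniteVortThm}'s construction survives the loss of compact support. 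The conceptual content is simply that the denominator in \eqref{VortEq} can approach its least value $1-t$ only near $r=y=0$, where the numerator vanishes, and the affine-in-$t$ plus arithmetic--geometric-mean reduction makes this precise.
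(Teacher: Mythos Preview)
Your proof is correct and rests on the same two ingredients as the paper's---the explicit formula \eqref{VortEq} for $\omega$, the bound $e^s\ge 1+s$, and the arithmetic--geometric mean inequality $2r|y|\le r^2+y^2$---but the organization differs slightly. The paper multiplies numerator and denominator of $|\omega(r,z,t)|$ by $e^{r^2+y^2}$ and then bounds the resulting denominator $e^{r^2+y^2}-t(1-2y^2)$ from below by $1-t+r^2+(1+2t)y^2\ge r^2+y^2$ in one stroke, which handles all $t\in[0,1)$ at once without a separate $t=0$ case. Your affine-in-$t$ reduction to the endpoints $t=0$ and $t=1$ is a perfectly valid alternative; it trades the paper's single denominator estimate for two easier verifications, at the cost of an extra calculus maximization at $t=0$ that the paper does not need. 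You are also a bit more careful than the paper in noting that the infimum $-1$ of $\partial_z\phi^0$ is approached as $r\to 0^+$ rather than attained on $\mathbb{R}^+\times\mathbb{R}$, and in flagging that the Schwartz-class data lies outside the $C_c^\infty$ framework of \Cref{InfiniteVortThm}; the paper defers that remark to just after the proof.
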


\begin{proof}
We will again take $y=h(r,z,t)$ where $h$ is the back-to-labels map used in Theorems \ref{BurgersThm} and \ref{InfiniteVortThm}.
We know that 
\begin{equation}
    \omega(r,z,t)=
    \frac{\omega^0(r,y)}{1+t\partial_y\phi^0(r,y)}.
\end{equation}
It is simple to check that for the initial vorticity chosen, the corresponding initial potential is
\begin{equation}
    \phi^0(r,y)=-y\exp(-r^2-y^2),
\end{equation}
and so
\begin{equation}
    \partial_y\phi^0(r,y)=-(1-2y^2)\exp(-r^2-y^2).
\end{equation}
Clearly
\begin{equation}
    \inf_{\substack{r\in\mathbb{R}^+\\ y\in\mathbb{R}}}\partial_y\phi^0(r,y)
    =\partial_y\phi^0(0,0)=-1,
\end{equation}
is the sole minimum, and $\omega^0(0,0)=0$.
This implies that $T_{max}=1$ and that Proposition \ref{BKM} does not apply to this initial data.

In fact, it is straightforward to compute that the vorticity remains uniformly bounded.
We compute that for all $0\leq t<1$
\begin{align}
    |\omega(r,z,t)|
    &=
    \frac{|\omega^0(r,y)|}
    {1+t\partial_y\phi^0(r,y)} \\
    &=
    \frac{2r|y|\exp(-r^2-y^2)}
    {1-t(1-2y^2)\exp(-r^2-y^2)} \\
    &=
    \frac{2r|y|}{\exp(r^2+y^2)-t(1-2y^2)} \\
    &\leq
    \frac{2r|y|}{1+r^2+y^2-t(1-2y^2)} \\
    &=
    \frac{2r|y|}{1-t+r^2+(1+2t)y^2} \\
    &\leq 
    \frac{r^2+y^2}{r^2+y^2} \\
    &=
    1,
\end{align}
where we have used Young's inequality and the fact that $\exp(s)\geq 1+s$.
This completes the proof.
\end{proof}

\begin{remark}
    Note that for the example considered here $\omega^0\notin 
C_c^\infty\left(\mathbb{R}^+
\times\mathbb{R}\right)$,
but it is in the Schwarz class, and it is straightforward to check that this a strong enough regularity and decay condition to have all the wellposedness results considered above in this section.
\end{remark}

\begin{remark}
The separation of variables that occurs in the infinite dimensional vorticity equation, with the vorticity potential satisfying the one dimensional Burgers equation involving only the $z$-direction is a consequence of the structure of the divergence free constraint.
It is not possible to take the formal limit of divergence free constraint written as
\begin{equation}
    \nabla\cdot u=
    \partial_z u_z+\partial_r u_r+k\frac{u_r}{r}=0,
\end{equation}
however if we re-normalize this equation, writing the constraint as
\begin{equation}
    \frac{1}{k}\partial_z u_z+
    \frac{1}{k}\partial_r u_r
    +\frac{u_r}{r}=0,
\end{equation}
then it is very clear that taking the formal limit $k\to +\infty$ yields
\begin{equation}
    u_r=0.
\end{equation}
Recalling that $\omega=\partial_r u_z-\partial_z u_r$, the constraint $u_r=0$
then implies that
\begin{equation}
    u_z=\partial_r^{-1}\omega.
\end{equation}
Likewise, the divergence form of the vorticity equation (see \cite{MillerTsai} for details) can be expressed as
\begin{equation}
    \partial_t\omega
    +\partial_r (\omega u_r)
    +\partial_z(\omega u_z)
    =0.
\end{equation}
The constraint $u_r=0$ in the formal limit $d\to+\infty$ then yields
\begin{equation}
 \partial_t\omega
    +\partial_z(\omega u_z)
    =
    \partial_t\omega+u_z\partial_z\omega
    +\omega\partial_z u_z
    =0.
\end{equation}

This means that $\phi$ in the infinite dimensional vorticity equation is exactly $u_z$, and that axial compression is the dominant factor in singularity formation in very high dimension.
It is well known that axial compression/planar stretching is a dominant factor in singularity formation for the three dimensional Navier--Stokes equation, which can be expressed by regularity criteria in terms of $\lambda_2^+$, the positive part of the middle eigenvalue of the strain matrix
\cites{NeuPen}.
This result suggests that for very high dimensions axial compression/hyperplanar stretching may likely lead to singularity formation, and axial compression becomes stronger relative to hyperplanar stretching in very high dimensions.
This is because more directions to stretch out means that rate of hyperplanar stretching will go like a factor of $\frac{1}{d-1}$ the rate of axial compression due to the divergence free constraint, which requires that $\tr(S)=0$.
\end{remark}

\section{The full Euler equation as a perturbation of the model equation} \label{PerturbSection}

In the final section of this paper, 
we will show that axisymmetric, swirl-free solutions of the Euler equation in very high dimensions can be treated as a perturbation of the infinite-dimensional vorticity equation, where the size of the perturbation is $\epsilon=\frac{1}{d-2}$.
This is significant, because we can express solutions of the incompressible Euler equation as a perturbation of a model equation that is equivalent the 1D Burgers equation.
We will now prove \Cref{PerturbThmIntro}, which is restated for the reader's convenience.

\begin{theorem} \label{PerturbThm}
Suppose $u\in C\left([0,T_{max});H^s
\left(\mathbb{R}^d\right)\right)$ is an axisymmetric, swirl-free solution of the Euler equation in $d$-dimensional space, with $s>\frac{d}{2}+2, d\geq 3$.
Then the vorticity satisfies the equation
\begin{equation}
    \partial_t\omega 
    +\phi\partial_z\omega
    +\omega\partial_z\phi
    +\epsilon Q_\epsilon[\omega]=0,
\end{equation}
where
\begin{align}
    \phi&=\partial_r^{-1}\omega \\
    \epsilon&=\frac{1}{d-2},
\end{align}
and the operator $Q_\epsilon$ is defined by
\begin{equation}
    Q_\epsilon[\omega]=
    (-r\partial_z\phi
    +\epsilon\partial_z\sigma)
    \partial_r\omega
    +\left(\phi+r\omega
    -\frac{\sigma}{r}
    -\epsilon\partial_r\sigma
    \right)\partial_z\omega
    -\frac{\partial_z\sigma}{r}\omega.
\end{equation}
We define $\sigma$ to be the unique solution of the Poisson-type equation
\begin{equation}
    -\Delta_\epsilon \sigma
    =
    -\left(\partial_r^2+\partial_z^2\right)
    (r\phi),
\end{equation}
with decay as $r,|z|\to \infty$,
where
\begin{equation}
    -\Delta_\epsilon
    =
    -\epsilon\left(\partial_r^2
    +\partial_z^2\right)
    -\frac{1}{r}\partial_r
    +\frac{1}{r^2}.
\end{equation}
Note that $\sigma$ can be expressed as an integral against a Poisson-type kernel as in \eqref{PoissonKernel}.
\end{theorem}

\begin{proof}
We begin by recalling the formulation of the vorticity equation in terms of the re-normalized stream function at the beginning of this section,
\begin{align}
\partial_t\omega
    +u_r\partial_r\omega
    +u_z\partial_z\omega 
    -\frac{k}{r}u_r\omega&=0 \\
    u_r&= 
    \epsilon \partial_z \Tilde{\psi}\\
    u_z&=
    -\epsilon\partial_r\Tilde{\psi}
    -\frac{\Tilde{\psi}}{r} \\
    -\Delta_{\epsilon}\Tilde{\psi}
    &=\omega, \\
\end{align}
where we have made the substitution $\epsilon=\frac{1}{k}$.
We wish to invert the operator $-\Delta_{\epsilon}$ in the small epsilon limit.
We can see that if $\epsilon=0$, then
\begin{align}
    -\partial_r\left(
    \frac{\Tilde{\psi}}{r}\right)
    &=
    \left(-\frac{1}{r}\partial_r
    +\frac{1}{r^2}\right)\Tilde{\psi} \\
    &=
    \omega \\
    &=
    \partial_r\phi,
\end{align}
and so
\begin{equation}
    \Tilde{\psi}=-r\phi.
\end{equation}

We will write $\Tilde{\psi}$ as an $\epsilon$ perturbation of this function, with
\begin{equation}
    \Tilde{\psi}=-r\phi +\epsilon\sigma,
\end{equation}
where $\sigma$ is defined as above.
It is straightforward to verify that this stream function satisfies the equation $-\Delta_\epsilon \Tilde{\psi}=\omega$.
We can see that
\begin{align}
    -\Delta_\epsilon \Tilde{\psi}
    &=
    -\epsilon\Delta_\epsilon \sigma
    +\epsilon\left(\partial_r^2+\partial_z^2\right)
    (r\phi)
    +\partial_r \phi \\
    &=
    \omega.
\end{align}
Computing the velocity from the stream function then yields
\begin{align}
    u_r&=
    -\epsilon r\partial_z \phi
    +\epsilon^2 \partial_z \sigma \\
    u_z &=
    \epsilon r\partial_r \phi
    -\epsilon^2\partial_r\sigma
    -\epsilon\frac{\sigma}{r}
    +(1+\epsilon)\phi.
\end{align}
We can also see that
\begin{equation}
    k\frac{u_r}{r}
    =
    -\partial_z\phi
    +\epsilon\frac{\partial_z\sigma}{r}.
\end{equation}
Plugging this into the vorticity evolution equation, we find that
\begin{multline}
    \partial_t \omega
    + \left(-\epsilon r\partial_z \phi
    +\epsilon^2 \partial_z \phi\right)
    \partial_r\omega
    +\left(\epsilon r\partial_r \phi
    -\epsilon^2\partial_r\sigma
    -\epsilon\frac{\sigma}{r}
    +(1+\epsilon)\phi\right)
    \partial_z\omega \\
    -\left(-\partial_z \phi
    +\epsilon\frac{\partial_z\sigma}{r}\right)
    \omega=0.
\end{multline}
Rearranging terms, we find that 
\begin{multline}
    \partial_t\omega
    +\phi\partial_z\omega
    +\omega\partial_z\phi \\
    +\epsilon\left(
    \left(-r\partial_z \phi
    +\epsilon \partial_z \sigma\right)
    \partial_r\omega
    +\left( r\omega
    -\epsilon\partial_r\sigma
    -\frac{\sigma}{r}
    +\phi\right)\partial_z\omega 
    -\frac{\partial_z\sigma}{r}\omega
    \right)=0.
\end{multline}
This completes the proof.
\end{proof}

\begin{remark}
Note that although the size of the perturbation tends to zero as $d \to \infty$, there is no immediately natural way to lift finite-time blowup for the infinite-dimensional vorticity equation to blowup for the Euler equation in very high dimensions.
This is because the operator $Q_\epsilon$ is also becoming singular as $\epsilon \to 0$, due to the fact that $(-\Delta_\epsilon)^{-1}$ loses its smoothing effect as $\epsilon \to 0$.
If there is a way to show that the Euler equation blows up in finite-time based on a perturbative argument of the Burgers-type blowup of the infinite dimensional limit, then it will require very subtle estimates on this operator in the small $\epsilon$ limit. More likely, this is heuristic evidence of a phenomenon that can better be attacked by other methods.
\end{remark}

\section*{Acknowledgments}
The author was partially supported by the Natural Sciences and Engineering Research Council of Canada (NSERC) under the grants RGPIN-2023-04534 and by the Pacific Institute for the Mathematical Sciences as a PIMS postdoctoral fellow at the University of British Columbia. This PIMS postdoctoral fellowship was supported by NSERC under grant no. 568577-2022.

The author would like to thank the organizers of the conference ``Workshop for Research and Workforce Development in Fluid Mechanics I" at the University of Nebraska-Lincoln---Kazuo Yamazaki, Zachary Bradshaw, Aynur Bulut, Adam Larios, and Xukai Yan---for a wonderful conference and for the chance to contribute to this conference proceeding. Finally, the author would like to thank the referee for a very thorough reading of the manuscript and for helpful suggestions.

\bibliography{bib}

\end{document}